\documentclass[11pt,a4paper]{article}

\usepackage[utf8]{inputenc}
\usepackage{amsmath}
\usepackage{amssymb}
\usepackage{amsthm}
\usepackage{hyperref}

\usepackage{orcidlink}
\usepackage{authblk,amsfonts, stmaryrd, url,float}

 \newtheorem{theorem}{Theorem}[section]
 
  \newtheorem{proposition}[theorem]{Proposition}
 
 \newtheorem{remark}[theorem]{Remark}

\def\Star{\ensuremath{{\bf(*)}}}
\def\DCP{\ensuremath{{\bf(DCP_{A})}}}

\def\CEP{\ensuremath{{\bf(CEP)}}}
\def\iDEP{\ensuremath{{\bf(DEP1)}}}
\def\iiDEP{\ensuremath{{\bf(DEP2)}}}

\def\R{\mathbb{R}}
\def\F{\mathbb{F}}
\def\P{\mathbb{P}}
\def\N{\mathbb{N}}

\def\MCP{\ensuremath{{\bf(MCP)}}}
\def\FCP{\ensuremath{{\bf(FCP)}}}
\def\DCP{\ensuremath{{\bf(DCP)}}}

\def\DFP{\ensuremath{{\bf(DFP)}}}
\def\CPP{\ensuremath{{\bf(CPP)}}}

\def\LIP{\ensuremath{{\bf(LIP)}}}
\def\LMP{\ensuremath{{\bf(LMP)}}}

\def\lbrak{\left[}
\def\rbrak{\right]}
\def\lpar{\left(}
\def\rpar{\right)}
\def\abClosed{\lbrak a,b\rbrak}

\def\stepLatt{\mathcal{L}}
\def\elemLatt{\stepLatt(\abClosed)}
\def\elemI{I_{\abClosed}}
\def\istepLatt{\stepLatt_u}

\def\into{\longrightarrow}
\def\sub{\subseteq}
\def\setdiff{\backslash}

\def\nseqlim{{\displaystyle\lim_{n\rightarrow\infty}}}
\def\nseqliminf{{\displaystyle\liminf_{n\rightarrow\infty}}}

\def\abAlg{\mathcal{A}}

\def\Cara{Carath\'{e}odory\  }

\usepackage{tikz} 
\usetikzlibrary{arrows.meta, positioning}

\title{Extension Theorems in Measure and Integration Theory as Completeness Axioms}
\author{Rafael Reno S. Cantuba\thanks{Associate Professor, Department of Mathematics and Statistics, De La Salle University (DLSU), Taft Ave., Manila, Philippines, supported by a grant from the Research Grants Management Office of DLSU, grant no.: 02FR1TAY24-1TAY25, email: rafael.cantuba@dlsu.edu.ph}\\ ORCID: 0000-0002-4685-8761\orcidlink{0000-0002-4685-8761}}
\date{}

\begin{document}

\maketitle

\begin{abstract}
In measure and integration theory, the themes of extending premeasures to outer measures and extending Daniell linear functionals to Daniell integrals are governed by the foundational extension theorems of Carath\'{e}odory and Daniell. These are proven to be equivalent to Littlewood's Three Principles, which are known to be equivalent alternatives to a completeness axiom for an ordered field. The continuity theorems of measure and integration theory are consequently shown to be included in the list of equivalences.
\end{abstract}

\vspace{0.5em}
\noindent
\begin{quote}
    \small
\textbf{Keywords:} reverse mathematics, completeness axiom, Littlewood's principles, Carathéodory outer measure, Carathéodory extension, Daniell functional, Daniell integral

\vspace{0.5em}
\textbf{2020 Mathematics Subject Classification:} 03B30, 28C05, 28A12 (primary); 26E30 (secondary)
\end{quote}

\section{Introduction}

In the words of J.E. Littlewood, ``The extent of knowledge required (in measure and integration theory) is nothing like so great as is sometimes supposed'' \cite[p.~26]{lit44}, after which, J. E. Littlewood introduces his famous three principles. The quote in itself suggests that it is possible to have a fundamental heuristic that can aid one in understanding measure and integration theory, and in fact, as may be observed in most standard texts, Littlewood's Three Principles may indeed serve this purpose. In \cite{can24}, these three principles were proven to be equivalent to the Dedekind completeness of an ordered field, and this is an addition to the list of alternative completeness axioms that are obtained from classical real analysis theorems motivated by \cite{pro13,rie01,tei13}, with a survey done in \cite{dev14}. In this paper, we further show that the fundamental theorems that govern two classical extension processes in measure and integration theory are also valid completeness axioms for an ordered field. These two extension processes that we shall cover are of rather opposing natures. One is the very basic process of extending a measure, while the other is one famous approach of developing integration theory without using measures at all, or rather that the notion of measure comes after the notion of integral has been established. By the former, we mean the theory due to \Cara\!\!, which is regarding outer measures, while by the latter, we mean the theory of the Daniell integral. Both approaches have earned their status in measure and integration theory as fundamental frameworks. More specifically, we focus on theorems under these theories that state conditions when an outer measure may be obtained, or when particular stages of the Daniell extension process are possible.

Since the very goal of the \Cara and Daniell extension processes is to obtain an integral that ``commutes'' with function limits, the involvement of the so-called ``continuity theorems'' of measure and integration theory, which are the Monotone Convergence Theorem, Fatou's Lemma and the Dominated Convergence Theorem \cite[p.~179]{sum23}, is only natural. Furthermore, we show that when these theorems are stated as axioms for an ordered field, then each is also equivalent to Dedekind completeness. The reader may find in Section~\ref{PrelSec} how  preliminaries for measure and integration theory may be stated for an ordered field. The exact statements of our new completeness axioms are in Section~\ref{StateSec}, while the general scheme of implications may be found in Figure~\ref{TheFig}.

\section{Preliminaries}\label{PrelSec}

We shall make use of all the notation and definitions for analysis in an ordered field $\F$ as laid out in \cite[Section~2]{can24}. The preliminaries section in this paper shall hence focus on what has not been defined in \cite[Section~2]{can24} but shall be needed for our main result.

By a vector lattice on $\abClosed$, we shall mean a vector lattice of functions $\abClosed\into\F$ under the vector space operations of pointwise function addition and pointwise left multiplication by a constant from $\F$, and with the lattice operations as taking the pointwise maximum and minimum of two functions, denoted by $\vee$ and $\wedge$, respectively. A linear functional $I$ on a vector lattice $\stepLatt$ is \emph{positive} if the function inequality $f\geq 0$ implies $I(f)\geq 0$. In such a case, by linearity, $I$ has the ``monotonicity property:" $f\leq g$ implies $I(f)\leq I(g)$. A positive linear functional $I$ on $\stepLatt$ is a \emph{Daniell functional} on $\stepLatt$ if for any sequence $(\varphi_n)$ in $\stepLatt$ that monotonically decreases to the zero function, the sequence $(I(\varphi_n))$, which is a sequence in $\F$, monotonically decreases to $0\in\F$. Since $I$ has the aforementioned monotonicity property, we may simply replace $I(\varphi_n)\downarrow 0$ to $\nseqlim I(\varphi_n)=0$ in the previous definition of Daniell functional. The set $\istepLatt(I)$ of all functions $f:\abClosed\into\F$, for which there exists a monotonically increasing sequence $(\varphi_n)$ in $\stepLatt$ such that $(\varphi_n)$ monotonically increases to $f$, shall be called the \emph{first Daniell extension} of $\stepLatt$. 

A Daniell functional $I$ on $\stepLatt$ \emph{has a continuous extension to $\istepLatt(I)$} if for every $f\in\istepLatt$ and every sequence $(\varphi_n)$ in $\stepLatt$, if $\varphi_n\uparrow f$, then the sequence $(I(\varphi_n))$, which is a sequence in $\F$, monotonically increases to some element $I(f)$ of $\F$. Throughout, the set of all positive integers shall be denoted by $\N$.
\begin{proposition}\label{htophiProp} If, for each $f\in\istepLatt$, there exists a sequence $(h_n)$ in $\stepLatt$ that monotonically increases to $f$ for which the sequence $(I(h_n))$ converges to some $I(f)\in\F$, then $I$ has a continuous extension to $\istepLatt$. 
\end{proposition}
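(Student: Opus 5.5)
Fix $f\in\istepLatt$ and let $(h_n)$ be the sequence given by the hypothesis: $h_n\in\stepLatt$, $h_n\uparrow f$, and $I(h_n)\to I(f)\in\F$. Let $(\varphi_n)$ be any sequence in $\stepLatt$ with $\varphi_n\uparrow f$. We must show that $(I(\varphi_n))$ increases to some element of $\F$. We will show that it increases to the same value $I(f)$. This also shows that the definition is independent of the chosen sequence.

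Monotonicity of $(I(\varphi_n))$ is immediate from positivity, since $\varphi_n\le\varphi_{n+1}$ gives $I(\varphi_n)\le I(\varphi_{n+1})$. The same argument shows $(I(h_n))$ is increasing, so $I(h_n)\le I(f)$ for all $n$. Here we must be careful: in a general ordered field an increasing sequence need not converge, and we may not appeal to suprema. We therefore prove convergence by a direct squeeze argument, which is the crux of the proof.

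\emph{Upper bound.} Fix $n$. The sequence $(\varphi_n - \varphi_n\wedge h_k)_k$ lies in $\stepLatt$, since $\stepLatt$ is a vector lattice. It decreases pointwise to $\varphi_n-\varphi_n\wedge f=0$, because $h_k\uparrow f\ge\varphi_n$ and $\wedge$ is pointwise. Since $I$ is a Daniell functional, $I(\varphi_n)-I(\varphi_n\wedge h_k)\to 0$ as $k\to\infty$. Now $I(\varphi_n\wedge h_k)\le I(h_k)\le I(f)$ for every $k$. Passing to the limit in $k$ (limits preserve non-strict inequalities in any ordered field) gives $I(\varphi_n)\le I(f)$.

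\emph{Approximation from below.} Symmetrically, for fixed $k$, the sequence $h_k-h_k\wedge\varphi_n\downarrow 0$ as $n\to\infty$, so $I(h_k\wedge\varphi_n)\to I(h_k)$. Let $\epsilon>0$ be in $\F$. Choose $k$ with $I(f)-I(h_k)<\epsilon/2$. Then choose $N$ with $I(h_k)-I(h_k\wedge\varphi_N)<\epsilon/2$. For all $n\ge N$ we have
\[
I(f)-\epsilon< I(h_k\wedge\varphi_N)\le I(\varphi_N)\le I(\varphi_n)\le I(f).
\]
Hence $I(\varphi_n)\to I(f)$, and the convergence is monotone increasing. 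Therefore $I$ has a continuous extension to $\istepLatt$.

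The only delicate point is to avoid any tacit completeness of $\F$. Every limit used is either supplied by the hypothesis, namely $I(h_n)\to I(f)$, or by the Daniell property, namely convergence to $0$. All remaining steps are order manipulations valid in any ordered field.
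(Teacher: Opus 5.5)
Your proof is correct and follows essentially the same route as the paper: you apply the Daniell property to $\varphi_n-\varphi_n\wedge h_k$ and to $h_k-h_k\wedge\varphi_n$, then use the same $\varepsilon/2$ squeeze between $I(h_k\wedge\varphi_N)$ and $I(f)$. Your explicit upper-bound step is slightly more careful than the paper. The paper records $\lim_{n\to\infty} I(h_n\wedge\varphi_k)=I(\varphi_k)$, but then justifies $I(\varphi_k)\le I(f)$ by monotonicity of $I$ even though $f$ need not lie in $\stepLatt$; you also state explicitly the condition $f\ge\varphi_n$ needed for the truncated differences to decrease to $0$.
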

\begin{proof} If $(x_m)$ is a monotonically increasing sequence in $\stepLatt$ and $L\in\F$, then, by a routine argument, the sequence $(L-(x_m\wedge L))$ monotonically decreases to $0$. Since $I$ is a Daniell functional, $\displaystyle{\lim_{m\rightarrow\infty}}I(L-(x_m\wedge L))=0$, and by the linearity of $I$,
\begin{flalign}
    &&\displaystyle{\lim_{m\rightarrow\infty}}I(x_m\wedge L)&=I(L),&((x_m)\in\stepLatt^\N,\  \forall m\in\N\  x_m\leq x_{m+1}).\label{wedgeLim}
\end{flalign}

If $(\varphi_k)$ is a sequence in $\stepLatt$ such that $\varphi_k\uparrow f$, then setting $m=n$, $x_m=h_n$, $L=\varphi_k$ in \eqref{wedgeLim}, we obtain 
\begin{equation}
    \nseqlim I(h_n\wedge\varphi_k)=I(\varphi_k),
\end{equation}
so, given $\varepsilon>0$, there exists $N\in\N$ such that $I(f)-\frac{\varepsilon}{2}<I(h_N)$, which implies
\begin{equation}
    I(f)-\varepsilon<I(h_N)-\frac{\varepsilon}{2}.\label{DaniellExtend1}
\end{equation}
Setting $m=k$, $x_m=\varphi_k$, $L=h_N$ in \eqref{wedgeLim}, we find that there exists $K\in\N$ such that $k\geq K$ implies $I(h_N)-\frac{\varepsilon}{2}<I(h_N\wedge \varphi_k)$, which, in conjunction with \eqref{DaniellExtend1}, gives us
\begin{flalign}
    && I(f)-\varepsilon&<I(h_N\wedge \varphi_k), & (k\geq K).\label{DaniellExtend2}
\end{flalign}
By an epsilon argument, the fact that $\varphi_k\uparrow f$ implies $\varphi_k\leq f$ for all $k\in\N$, which further implies $h_N\wedge\varphi_k\leq \varphi_k\leq f$ for all $k\in\N$. By the linearity of $I$, we have $I(h_N\wedge\varphi_k)\leq I(\varphi_k)\leq I(f)$ for all $k\in\N$, which, in conjunction with \eqref{DaniellExtend2}, gives us $I(f)-\varepsilon<I(\varphi_k)\leq I(f)$ for all $k\geq K$. Therefore, the sequence $(I(\varphi_k))$ converges to $I(f)\in\F$, and $I$ has a continuous extension to $\istepLatt$.
\end{proof}

Even though the arguments in the above proof of Proposition~\ref{htophiProp} parallel standard constructions in measure and integration theory, we review them here explicitly to emphasize that none of the steps depend on any completeness axiom for the ordered field $\F$. To keep this methodological constraint in view, we highlight the following.

\begin{remark}\label{heuRem}
As a guiding principle, any proof of a measure-theoretic or integration-theoretic statement concerning an arbitrary ordered field $\F$ must be established independently of any completeness axiom for $\F$, unless the current goal is precisely to prove that the statement in question is equivalent to a completeness axiom.
\end{remark}

As a consequence of Proposition~\ref{htophiProp}, if $I$ has a continuous extension to $\istepLatt(I)$, then $I$ has indeed been extended to a function $\istepLatt\into\F$, which is $f\mapsto\nseqlim I(\varphi_n)$. We assume no confusion arises in denoting the continuous extension of $I$ to $\istepLatt$ also by $I$.

Given a vector lattice $\stepLatt$ on $\abClosed$ and a Daniell functional $I$ on $\stepLatt$ that has a continuous extension to $\istepLatt(I)$, the \emph{lower Daniell functional} $\underline{I}$ and \emph{upper Daniell functional} $\overline{I}$ of $I$ are defined, for an arbitrary function $f:\abClosed\into\F$, as
\begin{enumerate}
    \item $\underline{I}(f) := \nseqlim I(\phi_n)$, given a sequence $(\phi_n)$ in $\stepLatt$ such that $\phi_n \uparrow \phi \le f$;
    \item $\overline{I}(f) := -\nseqlim I(\psi_n)$, given a sequence $(\psi_n)$ in $\stepLatt$ such that $\psi_n \uparrow \psi \le -f$.
\end{enumerate}

The set of all positive elements of $\F$ shall be denoted by $\P$. A function $f:\abClosed\into\F$ is \emph{Daniell-integrable} with respect to $(\stepLatt,I)$ if for every $\varepsilon\in\P$, there exist $\phi,\psi\in \istepLatt(I)$ such that $-\phi \le f \le \psi$ and $I(\phi)+I(\psi)<\varepsilon$. The vector space (over $\F$) of all functions $\abClosed\into\F$ Daniell-integrable with respect to $(\stepLatt,I)$ is denoted by $L^1(\stepLatt,I)$, and shall henceforth be referred to as the \emph{second Daniell extension} of $\stepLatt$.

We say that $I$ is \emph{extendable to a Daniell integral} on $L^1(\stepLatt,I)$ if for any $f:\abClosed\into\F$, we have $\widetilde{I}(f):=\underline{I}(f) = \overline{I}(f)\in\F$ for all $f\in L^1(\stepLatt,I)$. In such a case, $\widetilde{I}(f)$ is called the \emph{Daniell integral} of $f$ over $\abClosed$.

From \cite[p.~247]{str20}, if $I$ a Daniell functional on a vector lattice $\stepLatt$ on $\abClosed$, then the triple $(\abClosed,\stepLatt,I)$ is called an \emph{integration theory}, and Daniell's Extension Theorem states that, for the case $\F=\R$, the triple $(\abClosed,L^1(\stepLatt,I),\widetilde{I})$ is also an integration theory in which $I$ and $\widetilde{I}$ agree on $\stepLatt\sub L^1(\stepLatt,I)$. The term integration theory seems grandiose, and so we shall state our results in terms of the simpler building blocks of the Daniell extension process. We now go back to the case when $\F$ is an arbitrary ordered field.

A collection $\abAlg$ of subsets of $\abClosed$ is a \emph{ring} on $\abClosed$ if, for every $A,B\in\abAlg$, the intersection $A\cap B$ and the symmetric difference $A\Delta B$ are also in $\abAlg$. A ring on $\abClosed$ that contains $\abClosed$ as one of its elements is an \emph{algebra} on $\abClosed$.  
\begin{proposition} The collection of all measurable functions $\abClosed\into\F$ in the sense of \cite[p.~114]{can24} and the collection of all step functions $\abClosed\into\F$ are vector lattices, and the collection of all measurable subsets of $\abClosed$ in the sense of \cite[p.~115]{can24} is an algebra on $\abClosed$.
\end{proposition}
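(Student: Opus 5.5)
We do not have the exact definitions from \cite{can24} in front of us, so the plan is written against the standard shape they are expected to have. Step functions are finite $\F$-linear combinations of indicator functions of subintervals of $\abClosed$. A set is measurable if, for every $\varepsilon\in\P$, it can be approximated by a finite union of intervals up to a set of small outer measure, where outer measure is defined through countable interval covers. A function is measurable if it is, in the appropriate sense, nearly a step function or nearly continuous off a set of small outer measure. Throughout we follow Remark~\ref{heuRem}: no completeness axiom for $\F$ is used. The only tools allowed are the ordered-field arithmetic and finite or countable combinatorics of intervals.

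\emph{Step functions.} Given step functions $f,g$, we take a common refinement of their two partitions of $\abClosed$. This is a finite partition, so it exists without any completeness. On each piece of the refinement both functions are constant. Hence $f+g$, $cf$, $f\vee g$ and $f\wedge g$ are also constant on each piece, and so are step functions. The zero function is a step function, which finishes this part.

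\emph{Measurable sets form an algebra.} Clearly $\abClosed$ is measurable, being a single interval. For the ring axioms we use the identity $A\cap B=A\setdiff(A\Delta B)$ up to rearrangement; more directly, we prove closure under complement and under finite union. Intersection and symmetric difference then follow by De Morgan and by $A\Delta B=(A\setdiff B)\cup(B\setdiff A)$.
\begin{itemize}
\item For a finite union, let $E_A$ and $E_B$ be elementary approximants of $A$ and $B$ within $\varepsilon/2$ each. Then $E_A\cup E_B$ is elementary, and $(A\cup B)\Delta(E_A\cup E_B)\sub(A\Delta E_A)\cup(B\Delta E_B)$. We then use subadditivity of outer measure, which only needs concatenating two countable covers and adding their lengths.
\item For a complement, $A^c\Delta E^c=A\Delta E$, and the complement of an elementary set within $\abClosed$ is again elementary.
\end{itemize}

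\emph{Measurable functions.} Closure under scalar multiples is immediate from the definition. For a sum, lattice operations and the zero function, we combine the approximating objects for $f$ and $g$ (step functions or continuous functions) on the union of the two exceptional sets. That union has outer measure at most $\varepsilon$ by the same subadditivity. Sums and maxima of step functions are handled by the first part, and sums and maxima of continuous functions are continuous by the usual $\varepsilon$–$\delta$ argument, valid in any ordered field.

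\emph{Main obstacle.} The expected difficulty is the characterization of measurable functions in \cite{can24}, if it is given through preimages, say $\{f>c\}$ measurable for all $c$. In that case closure under sums normally uses a countable-union argument over rational $q$: $\{f+g>c\}=\bigcup_q\{f>q\}\cap\{g>c-q\}$. This needs both a countable dense subfield, which $\mathbb{Q}\sub\F$ supplies only if $\F$ is Archimedean, and countable unions of measurable sets. If the definition is of that form, we would instead go through the approximation-by-step-functions form and avoid countable unions. The union of exceptional sets then stays finite and the argument above goes through. For the lattice operations there is no such issue, since $\{f\vee g>c\}=\{f>c\}\cup\{g>c\}$ and $\{f\wedge g>c\}=\{f>c\}\cap\{g>c\}$ already use only the algebra property proved above.
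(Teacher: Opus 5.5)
\textbf{There is a genuine gap: you prove closure properties for classes defined differently from those in \cite{can24}, and here that difference matters.}

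As this paper uses them (see its proofs of $\DFP\implies\LIP$ and $\DCP\implies\LMP$), a function $f:\abClosed\into\F$ is measurable when some monotonically decreasing sequence of step functions converges to $f$ pointwise almost everywhere. A set $E$ is measurable when $\chi_E$ is a measurable function. Your notions (``nearly a finite union of intervals'', ``nearly a step function or nearly continuous off a set of small outer measure'') are approximation notions. Over an arbitrary ordered field, their agreement with the monotone-limit notion is essentially the content of Littlewood's first and second principles. Those are exactly the statements that \cite{can24} shows to be equivalent to Dedekind completeness. So closure of your classes tells you nothing about the classes in the proposition. Likewise, the fallback in your last paragraph (``go through the approximation-by-step-functions form'') is a completeness-dependent change of definition, which Remark~\ref{heuRem} forbids. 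Only your step-function paragraph, the common-refinement argument, stands as written.

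The missing idea is to work directly with the monotone sequences. Suppose $\varphi_n\downarrow f$ and $\psi_n\downarrow g$ off null sets $Z_1$ and $Z_2$. Then $Z=Z_1\cup Z_2$ is null; your cover-concatenation argument is exactly what proves this. Off $Z$, the sequences $(\varphi_n+\psi_n)$, $(\varphi_n\vee\psi_n)$, $(\varphi_n\wedge\psi_n)$ and $(|c|\varphi_n)$ are again decreasing sequences of step functions. They converge to $f+g$, $f\vee g$, $f\wedge g$ and $|c|f$ by limit rules valid in any ordered field, and constant functions are trivially measurable.

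Your claim that closure under scalar multiples is ``immediate'' fails under this definition when $c<0$. Negating a decreasing sequence makes it increasing. The paper therefore treats negative scalars separately, invoking the fact from \cite{can24} that a difference of two nonnegative measurable functions is measurable.

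Finally, the algebra property is not obtained from symmetric-difference approximations by elementary sets. It is read off from the vector-lattice structure through characteristic functions: $\chi_{E\cap F}=\chi_E\wedge\chi_F$, $\chi_{E\Delta F}=(\chi_E-\chi_F)\vee(\chi_F-\chi_E)$, and $\chi_{\abClosed}=1$. Because this uses differences, it also relies on the negative-scalar step.
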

\begin{proof} If $c\in\F$, and if $f$ and $g$ are measurable functions $\abClosed\into\F$, then there exist monotonically decreasing sequences $(\varphi_n)$ and $(\psi_n)$ of step functions $\abClosed\into\F$ that converge pointwise to $f$ and $g$, respectively, almost everywhere on $\abClosed$. Without loss of generality, we may assume the underlying set of measure zero to be the same set $Z$. That is, $x\in\abClosed\setdiff Z$ implies $\nseqlim\varphi_n(x)=f(x)$ and $\nseqlim\psi_n(x)=g(x)$. Since the rules for operations on convergent sequences are valid in $\F$, we have $\nseqlim[\varphi_n(x)+\psi_n(x)]=f(x)+ g(x)$, $\nseqlim\varphi_n(x)\vee\psi_n(x)=f(x)\vee g(x)$, $\nseqlim\varphi_n(x)\wedge\psi_n(x)=f(x)\wedge g(x)$, $\nseqlim |c|\varphi_n(x)=|c|f(x)$ and $\nseqlim c=c$, where the sequences $(\varphi_n+\psi_n)$, $(\varphi_n\vee\psi_n)$, $(\varphi_n\wedge\psi_n)$, $(|c|\varphi_n)$ and the constant sequence $(c)$ are all monotonically decreasing. Thus, $f+g$, $f\vee g$, $f\wedge g$, $|c|f$ and the constant function $c$ are all measurable. Since the difference of two nonnegative measurable functions is measurable \cite[p.~115]{can24}, so are $|c|(f\vee 0)-|c|(f\wedge 0)$ and $|c|(f\wedge 0)-|c|(f\vee 0)$, where $cf$ is always one of these two functions. Thus, the collection of all measurable functions $\abClosed\into\F$ form a vector lattice, a vector sublattice of which, is the collection of all step functions $\abClosed\into\F$. If $E$ and $F$ are measurable subsets of $\abClosed$, the characteristic functions $\chi_E$ and $\chi_F$ are measurable, and hence, so are $\chi_E\wedge\chi_F=\chi_{E\cap F}$, $(\chi_E-\chi_F)\vee(\chi_F-\chi_E)=|\chi_E-\chi_F|=\chi_{E\Delta F}$ and $1=\chi_{\abClosed}$ are all measurable. That is, $E\cap F$, $E\Delta F$ and $\abClosed$ are measurable. Therefore, the measurable subsets of $\abClosed$ form an algebra.
\end{proof}
\begin{proposition} The function $\varphi\mapsto\int_a^b\varphi$ is a positive linear functional on the vector lattice of all step functions $\abClosed\into\F$. 
\end{proposition}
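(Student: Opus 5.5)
The plan is to work straight from the definition of the integral of a step function over $\abClosed$ in an ordered field, as set up in \cite[Section~2]{can24}. A step function $\varphi$ comes with a partition $a=x_0<x_1<\cdots<x_m=b$ on whose open subintervals $(x_{i-1},x_i)$ it is constant, say with value $c_i$. Its integral is $\int_a^b\varphi=\sum_{i=1}^m c_i(x_i-x_{i-1})$. Everything will be done by finite algebra in $\F$, with no use of any completeness axiom, in line with Remark~\ref{heuRem}.

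\textbf{Well-definedness.} I first check that $\int_a^b\varphi$ does not depend on the chosen partition.
\begin{enumerate}
\item Inserting one extra point $t$ into $(x_{i-1},x_i)$ replaces the term $c_i(x_i-x_{i-1})$ by $c_i(t-x_{i-1})+c_i(x_i-t)$, which is the same element of $\F$.
\item By induction, any refinement gives the same value.
\item Any two admissible partitions have a common refinement, namely their union, so they give the same value.
\end{enumerate}
If \cite{can24} already records this, I will simply cite it.

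\textbf{Linearity.} Take step functions $\varphi,\psi$ and $c\in\F$.
\begin{enumerate}
\item Pass to the common refinement of partitions adapted to $\varphi$ and to $\psi$. On each open subinterval, $\varphi+\psi$ is constant with value $c_i+d_i$, and $c\varphi$ is constant with value $cc_i$.
\item Hence $\varphi+\psi$ and $c\varphi$ are step functions; the preceding proposition already gives this.
\item The identities $\int_a^b(\varphi+\psi)=\int_a^b\varphi+\int_a^b\psi$ and $\int_a^b c\varphi=c\int_a^b\varphi$ then follow from distributivity of finite sums in $\F$.
\end{enumerate}

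\textbf{Positivity.} Suppose $\varphi\geq 0$ on $\abClosed$. Each $c_i$ is a value of $\varphi$ at an interior point of $(x_{i-1},x_i)$, so $c_i\geq 0$. Also $x_i-x_{i-1}>0$. In an ordered field a product of a nonnegative and a positive element is nonnegative, and a sum of nonnegative elements is nonnegative. Therefore $\int_a^b\varphi\geq 0$.

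\textbf{Main obstacle.} The only step that needs real care is well-definedness under refinement, together with handling the values of $\varphi$ at the partition points. Those finitely many values do not enter the integral, so they do not affect it. The point to watch is that positivity uses only the values on the open subintervals. Since $\varphi\ge 0$ holds everywhere, those values are nonnegative, so this causes no difficulty.
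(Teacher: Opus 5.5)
Your proposal is correct and follows essentially the paper's route. Both arguments pass to a common refinement to get linearity term by term: you use the union of the partition points, while the paper uses the intersections $A_h\cap B_k$ of the singleton and open-interval pieces. Both read off positivity from the nonnegativity of the coefficients on nonempty pieces. Your explicit check that the integral is independent of the chosen partition is a step the paper leaves implicit, relying on the definition in \cite{can24}.
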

\begin{proof} Denote two step functions $\abClosed\into\F$ by $\phi=\sum_{h=1}^mc_h\chi_{A_h}$ and $\psi=\sum_{k=1}^ne_k\chi_{B_k}$, where the sets $A_h$ are pairwise disjoint sets each of which is either a singleton or an open interval with $\abClosed=\bigcup_{h=1}^mA_h$, and similarly for the sets $B_k$. The sets
\begin{flalign}
    && A_h\setdiff\lpar\bigcup_{t=1}^nB_t\rpar,\quad B_k\setdiff\lpar\bigcup_{t=1}^mA_t\rpar, &&\nonumber\\
    && A_h\cap B_k, &&(h\in\{1,2,\ldots,m\},\  k\in\{1,2,\ldots,n\}),\nonumber
\end{flalign}
are pairwise disjoint, and we have the relations
\begin{flalign}
    && \chi_{A_h}&=\chi_{A_h\setdiff\lpar\bigcup_{t=1}^nB_t\rpar}+\sum_{k=1}^n\chi_{A_h\cap B_k}, &(h\in\{1,2,\ldots,m\}),\nonumber\\
    && \chi_{B_k}&=\chi_{B_k\setdiff\lpar\bigcup_{t=1}^mA_t\rpar}+\sum_{h=1}^m\chi_{A_h\cap B_k}, &(k\in\{1,2,\ldots,n\}),\nonumber
\end{flalign}
for the characteristic functions. Thus, we may represent $\phi$ and $\psi$ as
\begin{flalign}
    && \phi&=\sum_{h=1}^mc_h\chi_{A_h\setdiff\lpar\bigcup_{t=1}^nB_t\rpar}+\sum_{k=1}^n0\chi_{B_k\setdiff\lpar\bigcup_{t=1}^mA_t\rpar}+\sum_{h=1}^m\sum_{k=1}^nc_h\chi_{A_h\cap B_k}, &\label{steprep1}\\
    && \psi&=\sum_{h=1}^m0\chi_{A_h\setdiff\lpar\bigcup_{t=1}^nB_t\rpar}+\sum_{k=1}^ne_k\chi_{B_k\setdiff\lpar\bigcup_{t=1}^mA_t\rpar}+\sum_{k=1}^n\sum_{h=1}^me_k\chi_{A_h\cap B_k}, &\label{steprep2}
\end{flalign}
and using these representations of $\phi$ and $\psi$, the coefficients for the corresponding representations for $\phi+\psi$, $\kappa\phi$ (for all $\kappa\in\F$), $\phi\vee\psi$ and $\phi\wedge\psi$ may be obtained by performing these operations on the corresponding coefficients, term-by-term, according to how they are written in \eqref{steprep1}--\eqref{steprep2}. Also, the relations \eqref{steprep1}--\eqref{steprep2} may be used to compute for $\int_a^b(\phi+\psi)$ and $\int_a^b(\kappa\phi)$ and show that these are equal to $\int_a^b\phi+\int_a^b\psi$ and $\kappa \int_a^b\phi$, respectively. If $\phi\geq 0$, then, for each $H\in\{1,2,\ldots,m\}$, if $x_H\in A_H$, then $0\leq \phi(x_H)=\sum_{h=1}^mc_h\chi_{A_h}(x_h)=c_H\cdot 1$, because the sets $A_h$ are pairwise disjoint. Thus, $c_H\geq 0$ for all $H$, and because each $\chi_{A_H}$ is a nonnegative-valued function, $\int_a^b\phi$ is the finite sum of the nonnegative products $c_H\chi_{A_H}$, so $\int_a^b\phi\geq 0$. Therefore, $\varphi\mapsto\int_a^b\varphi$ is a positive linear functional.
\end{proof}
A \emph{premeasure} on an algebra $\abAlg$ on $\abClosed$ is a function $\mu:\abAlg\into\P\cup\{0,\infty\}$ that sends the empty set to zero, and satisfies \emph{$\sigma$-additivity}, which means that for any pairwise disjoint sequence $(E_n)$ in $\abAlg$, if $\bigcup_{n=1}^\infty E_n\in\abAlg$ and if the series $\sum_{n=1}^\infty\mu(E_n)$ converges in $\F$, then $\mu\lpar\bigcup_{n=1}^\infty E_n\rpar=\sum_{n=1}^\infty\mu(E_n)$. A premeasure $\mu$ on an algebra on $\abClosed$ is a \emph{finite} premeasure if $\mu(\abClosed)<\infty$.  An algebra that is closed under the formation of countable unions is a \emph{$\sigma$-algebra}, and a premeasure defined on a $\sigma$-algebra is a \emph{measure}.  Given a premeasure $\mu$ on an algebra $\abAlg$ on $\abClosed$, a subset $E$ of $\abClosed$ \emph{has $\mu^*$-measure} if there exists $\mu^*(E)\in\F$ such that
\[\mu^*(E)=\inf\left\{\sum_{n=1}^\infty\mu(E_n)  :\  (E_n)\in\abAlg^\N,\ E\sub\bigcup_{n=1}^\infty E_n,\  \sum_{n=1}^\infty\mu(E_n)\in\F\right\},\]
while $E\sub\abClosed$ is said to be \emph{$\mu^*$-measurable} if, for any $A\sub\abClosed$ that has $\mu^*$-measure, the intersection $A\cap E$ and the set difference $A\setdiff E$ also have $\mu^*$-measure, and furthermore,
\[\mu^*(A)\geq\mu^*(A\cap E)+\mu^*(A\setdiff E).\]
The function $\mu^*:E\mapsto\mu^*(E)$ is the \emph{\Cara outer measure} induced by the premeasure $\mu$.

\section{Statements of the Dedekind completeness axioms}\label{StateSec}

We are now ready to give the statements of the measure-theoretic and integration-theoretic axioms for $\F$ that we shall prove to be equivalent.

\begin{itemize}
\item[\CPP] \emph{Carath\'{e}odory Premeasure Property.} Given $\abClosed\sub\F$, the function\linebreak $E\mapsto\int_a^b\chi_E$ is a premeasure on the algebra of all measurable subsets of $\abClosed$.
    \item[\CEP]\emph{Carath\'{e}odory Extension Property.} Given $\abClosed\sub\F$, the function\linebreak $E\mapsto\int_a^b\chi_E$ is extendable to a \Cara outer measure $\mu^*$, and the collection of all $\mu^*$-measurable subsets of $\abClosed$ is a $\sigma$-algebra that contains the algebra of all measurable subsets of $\abClosed$.
    \item[\DFP]\emph{Daniell Functional Property.} Given $\abClosed\sub\F$,  the positive linear functional $\varphi\mapsto\int_a^b\varphi$ is a Daniell functional on the vector lattice of all step functions $\abClosed\into\F$.
    \item[\iDEP]\emph{First Daniell Extension Property.} Given $\abClosed\sub\F$,  the positive linear functional $I_{\abClosed}:\varphi\mapsto\int_a^b\varphi$ on the vector lattice $\stepLatt(\abClosed)$ of all step functions $\abClosed\into\F$ has a continuous extension to the first Daniell extension $\istepLatt(I_{\abClosed})$ of $\stepLatt(\abClosed)$. 
\item[\iiDEP]\emph{Full Daniell Extension Property.} Given $\abClosed\sub\F$, the positive linear functional $I_{\abClosed}:\varphi\mapsto\int_a^b\varphi$ on the vector lattice $\stepLatt(\abClosed)$ of all step functions $\abClosed\into\F$ is extendable to a Daniell integral on the second Daniell extension $L^1(\stepLatt(\abClosed),I_{\abClosed})$ of $\stepLatt(\abClosed)$. 

    \item[\MCP]\emph{Monotone Convergence Property.} Given $\abClosed\sub\F$, for each monotonically increasing sequence $(\varphi_n)$ of nonnegative Lebesgue measurable functions $\varphi_n:\abClosed\into\F$ that converges pointwise to $f:\abClosed\into\F$ almost everywhere on $\abClosed$, we have $\int_a^bf=\nseqlim\int_a^b\varphi_n$.
    
    \item[\FCP]\emph{Fatou Convergence Property.} Given $\abClosed\sub\F$, for each sequence $(\varphi_n)$ of nonnegative Lebesgue measurable functions $\varphi_n:\abClosed\into\F$ that converges pointwise to $f:\abClosed\into\F$ almost everywhere on $\abClosed$, the quantity $\nseqliminf\int_a^b\varphi_n$ exists in $\F$. Furthermore, $\int_a^bf\leq\nseqliminf\int_a^b\varphi_n$.
    \item[\DCP]\emph{Dominated Convergence Property.} Given $\abClosed\sub\F$, for each sequence $(\varphi_n)$ of Lebesgue measurable functions $\varphi_n:\abClosed\into\F$ that converges pointwise to $f:\abClosed\into\F$ almost everywhere on $\abClosed$, if there exists a Lebesgue measurable function $g:\abClosed\into\F$ such that $|\varphi_n|\leq g$ almost everywhere on $\abClosed$ for all $n\in\N$, then $\int_a^bf=\nseqlim\int_a^b\varphi_n$.

    \item[\LIP]\emph{Lebesgue Integral Property.} Given $\abClosed\sub\F$, every Lebesgue measurable function has a Lebesgue integral.

        \item[\LMP]\emph{Lebesgue Measure Property.} Given $\abClosed\sub\F$, every  measurable subset of $\abClosed$ has Lebesgue measure.

\end{itemize}

Each of the above statements has a general form \Star, which is the statement that results when we replace $\abClosed$ by an arbitrary subset $X$ of $\F$, or the function $E\mapsto\int_a^b\chi_E$ by an arbitrary measure $\mu$ on $X$, or $\elemI$ by an arbitrary positive linear functional $I$ on an arbitrary vector lattice on $X$, and so on. This general form \Star\  is a theorem for the case $\F=\R$, which is the unique Dedekind complete field up to isomorphism, so \Star\  follows from Dedekind completeness, which implies the specific case, which is, say, when $X=\abClosed$, $\mu:E\mapsto\int_a^b\chi_E$, $I=\elemLatt$, and so on. Thus, our goal, similar to that done in \cite[pp.~116--117]{can24}, is to show that the specific forms of the statements, the ones written above, imply Dedekind completeness of $\F$, and this completes the circle of equivalence. In particular, \LIP\  and \LMP, together with Littlewood's Three Principles, have already been proven to be equivalent to Dedekind completeness in \cite{can24}, so we have the more specific goal of proving that each of the other statements is a sufficient condition for \LIP\  or \LMP. We refer the reader to \cite{can24} for a full discussion of the involvement of Littlewood's Three Principles in reverse mathematics. 

\section{Logical relationships between the statements}

The general plan for proving the equivalence of the statements in Section~\ref{StateSec} with the Dedekind completeness of $\F$ is summarized in the following figure.

\begin{figure}[H]
\centering
\begin{tikzpicture}[
scale=0.9,             
    transform shape,       
    double arrow/.style={
        double, 
        double distance=1.5pt, 
        draw=#1, 
        ->, 
        >={Latex[length=4pt, width=5pt, color=#1]}, 
        shorten >= 3pt, 
        shorten <= 3pt,
        rounded corners=10pt 
    },
    double iffarrow/.style={
        double, 
        double distance=1.5pt, 
        draw=#1, 
        <->, 
        >={Latex[length=4pt, width=5pt, color=#1]}, 
        shorten >= 3pt, 
        shorten <= 3pt,
        rounded corners=10pt 
    },
    vertex/.style={
        draw, 
        rectangle, 
        rounded corners=5pt, 
        minimum width=40pt, 
        minimum height=20pt
    }
]

        \node[vertex, draw=black, fill=white] (iDEP) at (0,15) {\iDEP};
        \node[vertex, draw=black, fill=white] (DFP) at (0,25.5*0.5) {\DFP};
        \node[vertex, draw=black, fill=lightgray] (LIP) at (0,10.5) {\LIP};

        \node[vertex, draw=black, fill=lightgray] (iiDEP) at (3.875,15) {\iiDEP};
        \node[vertex, draw=black, fill=lightgray] (CEP) at (3.875,13.5) {\CEP};
        \node[vertex, draw=black, fill=white] (CPP) at (3.875,12) {\CPP};
        \node[vertex, draw=black, fill=lightgray] (LMP) at (3.875,10.5) {\LMP};

        \node[vertex, draw=black, fill=white] (MCP) at (2*3.875,15) {\MCP};
        \node[vertex, draw=black, fill=white] (FCP) at (2*3.875,25.5*0.5) {\FCP};
        \node[vertex, draw=black, fill=white] (DCP) at (2*3.875,10.5) {\DCP};

     \draw[double arrow=black] (iiDEP) -- (iDEP);
    \draw[double arrow=black] (iDEP) -- (DFP);
    \draw[double arrow=black] (DFP) -- (LIP);
    \draw[double iffarrow=black] (LIP) -- (LMP);

    \draw[double arrow=black] (iiDEP) -- (MCP);
    \draw[double arrow=black] (MCP) -- (FCP);
    \draw[double arrow=black] (FCP) -- (DCP);
    \draw[double arrow=black] (DCP) -- (LMP);

    \draw[double arrow=black] (CEP) -- (CPP);
    \draw[double arrow=black] (CPP) -- (LMP);
\end{tikzpicture}
\caption{Extension and convergence theorems in measure and integration theory, stated as axioms for an ordered field, are each a sufficient condition for the Lebesgue Integral Property and Lebesgue Measure Property, which were proven in \cite{can24} to be equivalent to Dedekind completeness.}\label{TheFig}
\end{figure}

In accordance with the explanation in the previous section, the diagram in Figure~\ref{TheFig} shows that our strategy is to show that each statement in question is a sufficient condition for \LIP\  and \LMP, which have been proven in \cite[Theorem~1]{can24} to be equivalent to Dedekind completeness. That is, if we succeed in proving every one-directional implication in Figure~\ref{TheFig}, then Dedekind completeness implies an arbitrary statement from Section~\ref{StateSec} which implies one of \LIP, \LMP, which implies Dedekind completeness. Hence, each such statement is an equivalent completeness axiom for $\F$. Since some chains of implications can be formed between statements, we highlighted in gray the strongest statements, which are \iiDEP\  and \CEP, and also the weakest statements, which should be \LIP\  and \LMP, as previously explained. Some of the implications are immediate from the preliminaries as we shall now show.

\section{Implications immediate from the theory}\label{ImpliesSec}

If a positive linear functional $\elemI:\varphi\mapsto\int_a^b\varphi$ on the vector lattice $\elemLatt$ is extendable to a Daniell integral on the second Daniell extension of $\elemLatt$, then the first step of this extension process must be possible, so $\elemI$ has a continuous extension to the first Daniell extension of $\elemLatt$. Necessarily, $\elemI$ in this case must be a Daniell functional, so we have
\[\iiDEP\implies\iDEP\implies\DFP.\]
If the function $E\mapsto\int_a^b\chi_E$ is extendable to a \Cara outer measure on $\abClosed$, then this extension process necessitates that $E\mapsto\int_a^b\chi_E$ is a premeasure, and if it is so, then every element of the algebra of all measurable subsets $E$ of $\abClosed$ must be assigned an integral $\int_a^b\chi_E\in\F$. Thus,
\[\CEP\implies\CPP\implies\LMP.\]
The very essence of the Daniell extension framework for integration theory is that if the extension of a Daniell functional exists for the second Daniell extension of the underlying vector lattice, then the Daniell integral satisfies a Monotone Convergence Principle, which implies an analog of Fatou's lemma, which in turn implies an analog of the Dominated Convergence Principle. See, for instance, \cite[Propositions~10,12,13]{roy88}. The statements of these propositions may be instantiated on the specific integrals and functions in our statements of \iiDEP, \MCP, \FCP\  and \DCP. Thus, the implications
\[\iiDEP\implies\MCP\implies\FCP\implies\DCP,\]
are immediate. As mentioned earlier, the equivalence of \LIP\  and \LMP\  is part of the subject of the paper \cite{can24}. Thus, by inspection of Figure~\ref{TheFig}, only two implications remain, which are ``\DFP\  implies \LIP,'' and ``\DCP\  implies \LMP.''

\section{Summary and final remarks}

To complete the proofs of the implications in Figure~\ref{TheFig}, we state and prove our main result.

\begin{theorem} The statements in Figure~\ref{TheFig} are each equivalent to the Dedekind completeness of the ordered field $\F$.
\end{theorem}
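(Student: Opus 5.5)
The overall strategy follows the one announced after Figure~\ref{TheFig}. Dedekind completeness makes $\F\cong\R$, and over $\R$ every statement in Section~\ref{StateSec} is a classical theorem. Section~\ref{ImpliesSec} already gives the chains $\iiDEP\Rightarrow\iDEP\Rightarrow\DFP$, $\CEP\Rightarrow\CPP\Rightarrow\LMP$ and $\iiDEP\Rightarrow\MCP\Rightarrow\FCP\Rightarrow\DCP$. Together with \LIP$\iff$\LMP$\iff$ Dedekind completeness from \cite[Theorem~1]{can24}, it therefore only remains to prove that \DFP\ implies \LIP, and that \DCP\ implies \LMP. For both I would argue by contraposition: assume $\F$ is not Dedekind complete and build an explicit counterexample, in the style of \cite{can24}. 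I would keep Remark~\ref{heuRem} in view throughout, so that every auxiliary step (for instance Proposition~\ref{htophiProp}) uses no completeness.

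For $\DFP\Rightarrow\LIP$, suppose $\F$ is incomplete. Then there is a bounded monotonically increasing sequence $(c_n)$ in $\abClosed$ with no supremum in $\F$. Equivalently, there is a cut, and a strictly increasing sequence $a=c_0<c_1<c_2<\cdots$ converging to the gap from the left; such a sequence exists, as used in \cite{can24}.

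\begin{itemize}
\item Take $\varphi_n:=\chi_{(c_n,b]}$ restricted to the part beyond the gap, or more simply $\varphi_n:=\chi_{[c_n,\,s)}$ with $s$ chosen suitably.
\item A cleaner choice is $\varphi_n=\chi_{(c_n,d_n)}$, where $(d_n)$ decreases to the same gap from the right.
\end{itemize}

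Because no point of $\F$ lies in the gap, $\bigcap_n (c_n,d_n)=\varnothing$. Hence $\varphi_n\downarrow 0$ pointwise, while $\int_a^b\varphi_n=d_n-c_n$. If the gap is not ``infinitesimally thin'', the differences $d_n-c_n$ do not tend to $0$, and \DFP\ fails. This is the delicate point. In a non-Archimedean field one can choose $c_n$ increasing and $d_n$ decreasing with $d_n-c_n$ bounded below by a positive infinitesimal; for example, the gap between the finite elements and the infinitesimal neighbourhood works. In an Archimedean but incomplete field, such as a subfield of $\R$, every gap is thin, so I would instead use a countable-additivity failure. Enumerate the rationals, or any countable dense set, in $\abClosed$ and cover them by tiny open intervals of total length less than $(b-a)/2$. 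Let $\varphi_n$ be $1$ minus the indicator of the first $n$ intervals. One then has to argue that every point of $\F\cap\abClosed$ is eventually covered, which is plausible when $\F$ is countable, for example $\F=\mathbb{Q}$. This makes $\varphi_n\downarrow0$ while the integrals stay $\ge (b-a)/2$. For uncountable Archimedean fields this fails, so the better route is to exhibit directly a step-function sequence $\varphi_n\downarrow 0$ whose integrals have no limit in $\F$, or tend to a nonzero value. The natural choice is $\varphi_n=\chi_{[a,c_n)}$ complemented appropriately, and alternatively an increasing sequence whose integrals $c_n-a$ have no supremum, combined with Proposition~\ref{htophiProp}-type limits. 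Since \DFP\ forces limits to exist only in decreasing-to-zero situations, I expect the cleanest route is to reduce to \LIP\ by showing that, under \DFP, every measurable function in the sense of \cite{can24} has integral equal to $\lim\int\varphi_n$. The monotone sequence of step functions in the definition supplies the $\varphi_n$. Given two such sequences, one compares them via $\varphi_n-\varphi_n\wedge\psi_m\downarrow$ and the Daniell property, as in the proof of Proposition~\ref{htophiProp}, so that the limit exists in $\F$ and is well defined. The existence of the limit is the main obstacle. I would handle it by contradiction: if $\lim\int\varphi_n$ failed to exist, the decreasing sequence $\varphi_n-\varphi_m$ construction would yield a step sequence decreasing to $0$ with integrals not tending to $0$, contradicting \DFP.

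For $\DCP\Rightarrow\LMP$, take a measurable $E\subseteq\abClosed$ with a decreasing sequence of step functions $\varphi_n\downarrow\chi_E$ almost everywhere. Replacing $\varphi_n$ by $(\varphi_n\vee0)\wedge1$ keeps them step functions, and then $|\varphi_n|\le 1=g$. By \DCP, $\int_a^b\chi_E=\lim\int_a^b\varphi_n$ exists in $\F$, so $E$ has Lebesgue measure and \LMP\ holds. This direction should be routine; the real difficulty is the existence-of-limit argument in $\DFP\Rightarrow\LIP$.
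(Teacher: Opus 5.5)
Your reduction to the two remaining implications and your $\DCP\Rightarrow\LMP$ step are fine and match the paper. The paper takes $g=\varphi_1$, using $0\le\chi_E\le\varphi_n$ a.e., rather than truncating, so \DCP\ is applied to every approximating sequence unchanged.

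The genuine gap is in $\DFP\Rightarrow\LIP$, exactly where you locate it: the existence of $\nseqlim\int_a^b\varphi_n$ in $\F$. Your proposed contradiction does not work. For fixed $n$ the functions $\varphi_n-\varphi_m$ \emph{increase} in $m$ to $\varphi_n-f$, and no sequence decreasing to the zero function can detect a missing infimum of the integrals $\int_a^b\varphi_n$. \DFP\ only asserts that certain integrals tend to $0$, an element already in $\F$, so it cannot produce the limit you need. The comparison via $\varphi_n-\varphi_n\wedge\psi_m$ yields at best uniqueness of the limits, presupposing that they exist. (Your separate index $m$ is what makes this monotone in $n$; the paper's same-index version $\varphi_n-\varphi_n\wedge\psi_n$ is not.) The paper's proof simply asserts existence at this point, so it does not supply the missing idea either.

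In fact no argument can close this gap, because \DFP\ holds in some incomplete fields.
\begin{itemize}
\item Let $\F=\R^{\N}/\mathcal{U}$ for a nonprincipal ultrafilter $\mathcal{U}$ on $\N$. This field is countably saturated, so every countable set of positive elements has a positive lower bound. Hence every convergent sequence is eventually constant. If step functions $\varphi_n\downarrow 0$ pointwise, the sets $\{\varphi_n>0\}$ are nested finite unions of points and open intervals with empty intersection. By saturation one of them is empty, so $\int_a^b\varphi_n$ is eventually $0$ and \DFP\ holds. Yet on $[0,1]$ the step functions $\chi_{(0,1/n)}$ decrease pointwise to the indicator of the positive infinitesimals, while $\int_0^1\chi_{(0,1/n)}=1/n$ has no limit in $\F$. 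So \LIP\ fails.
\item Likewise, take a proper subfield $\F\subset\R$ meeting every uncountable closed set; such fields exist by a Bernstein-type transfinite construction. It satisfies \DFP: extend the $\varphi_n$ to $\R$, note that the limit is a Borel function vanishing on $\F$ and hence almost everywhere, and apply the real monotone convergence theorem. But $\chi_{[a,d_n]}$, with $d_n\in\F$ decreasing to a real $r\in(a,b)\setminus\F$, violates \LIP.
\end{itemize}
This is why your contrapositive search stalled on uncountable Archimedean fields. It is also why your ``infinitesimal gap'' intervals $(c_n,d_n)$ need not have empty intersection. So \DFP\ is not a completeness axiom. The left branch of Figure~\ref{TheFig} can only be saved by bypassing it, e.g.\ by refuting \iDEP\ directly in an incomplete field with $\varphi_n=\chi_{[a,c_n]}$, where $(c_n)$ is increasing in $\abClosed$ with no supremum in $\F$.
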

\begin{proof} As discussed in Section~\ref{ImpliesSec}, only two implications remain, which we now prove. Consider an interval $\abClosed\sub\F$.\\

\noindent$\DFP\implies\LIP$. Let $f:\abClosed\into\F$ be a Lebesgue measurable function, and suppose $(\varphi_n)$ and $(\psi_n)$ are a monotonically decreasing sequences of step functions that converge to $f$ almost everywhere on $\abClosed$. The sequence $(\varphi_n\wedge\psi_n)$ also monotonically decreases to $f$, and the sequence $(\varphi_n-\varphi_n\wedge\psi_n)$ monotonically decreases to the zero function. If \DFP\ is true, then we have $\nseqlim\int_a^b(\varphi_n-\varphi_n\wedge\psi_n)=0$, which implies $\nseqlim\int_a^b\varphi_n=\nseqlim\int_a^b(\varphi_n\wedge\psi_n)$. Since $(\varphi_n)$ and $(\psi_n)$ are arbitrary, we may interchange $\varphi_n$ and $\psi_n$ in the previous equation, and we further have
\[\nseqlim\int_a^b\varphi_n=\nseqlim\int_a^b(\varphi_n\wedge\psi_n)=\nseqlim\int_a^b\psi_n.\]
That is, for any two sequences of step functions that monotonically decrease to $f$ almost everywhere on $\abClosed$, the corresponding sequences of integrals converge to the same value. From the definition in \cite[pp.~114--115]{can24}, this means that $f$ has a Lebesgue integral over $\abClosed$, which proves \LIP.\\

\noindent$\DCP\implies\LMP$. Let $E$ be a measurable subset of $\abClosed$, and let $(\varphi_n)$ be a sequence of step functions (which are measurable functions) $\abClosed\into\F$ that monotonically decrease to $\chi_E$ almost everywhere on $\abClosed$. Such a sequence is guaranteed to exist by the definition of measurable subset of $\abClosed$ from \cite[pp.~114--115]{can24}. The fact that $(\varphi_n)$ is monotonically decreasing may be used in a routine epsilon argument, valid in the ordered field $\F$, to show that $\chi_E\leq\varphi_n$ for all $n\in\N$. Also for the same reason that $(\varphi_n)$ is monotonically decreasing, we further have $\chi_E\leq \varphi_n\leq \varphi_1$, but since the left-most function in these inequalities is nonnegative, we may replace $\varphi_n$ by $|\varphi_n|$. That is, $|\varphi_n|\leq \varphi_1$ for all $n\in\N$. If \DCP\  holds, then $\int_a^b\chi_E=\nseqlim\int_a^b\varphi_n$. Hence, $\chi_E$ has a Lebesgue integral over $\abClosed$, or equivalently, $E$ has Lebesgue measure, which proves \LMP.
\end{proof}

We hope that by proving the equivalence of the statements in Figure~\ref{TheFig}, we have contributed a significant heuristic to contemporary integration theory that clarifies exactly at which parts of the machinery of measure and integration the completeness of the real field is needed, and up to what point proofs can be handled by only the properties of the underlying constructs, such as Daniell functionals or premeasures, and we conjecture that this perspective can further elucidate more abstract aspects of measure and integration theory.


\begin{thebibliography}{9}

\bibitem{can24}
R. R. S. Cantuba, \textit{Littlewood's principles in reverse real analysis}, Real Anal. Exchange, \textbf{49(1)} (2024), 111--122.

\bibitem{dev14}
M. Deveau and H. Teismann, \textit{$72+42$: characterizations of the completeness and Archimedean properties of ordered fields}, Real Anal. Exchange, \textbf{39(2)} (2013/14), 261--304.


\bibitem{lit44}
J. E. Littlewood, \textit{Lectures on the Theory of Functions}, Oxford University Press, Oxford, 1944.

\bibitem{pro13}
J. Propp, \textit{Real analysis in reverse}, Amer. Math. Monthly, \textbf{120(5)} (2013), 392--408.

\bibitem{rie01}
O. Riemenschneider, \textit{37 elementare axiomatische Charakterisierungen des reellen Zahlk\"orpers}, Mitt. Math. Ges. Hamburg, \textbf{20} (2001), 71--95. (English translation available at \url{https://www.math.uni-hamburg.de/home/riemenschneider/axioms.pdf})

\bibitem{roy88}
H. L. Royden, \textit{Real Analysis}, Third edition. Macmillan Publishing Company, New York, 1988.

\bibitem{str20}
D. W. Stroock, \textit{Essentials of Integration Theory for Analysis}, Second edition. Grad. Texts Math., Vol. 262. Springer, Cham, 2020.

\bibitem{sum23}
S. Majumdar, \textit{Extension of the continuity theorems of Lebesgue integration}, Real Anal. Exchange, \textbf{48(1)} (2023), 179--200.

\bibitem{tei13}
H. Teismann, \textit{Toward a more complete list of completeness axioms}, Amer. Math. Monthly, \textbf{120(2)} (2013), 99--115.

\end{thebibliography}
\end{document}